\documentclass[11pt,a4paper]{amsart}

\usepackage{amscd,amssymb,amsthm}
\usepackage{graphicx}

\vfuzz2pt 
\hfuzz2pt 

\numberwithin{equation}{section}

\theoremstyle{plain}
\newtheorem{theorem}{Theorem}[section]

\newtheorem{lemma}[theorem]{Lemma}
\newtheorem{proposition}[theorem]{Proposition}

\theoremstyle{definition}
\newtheorem{definition}{Definition}[section]

\theoremstyle{remark}

\newtheorem{example}{Example}[section]

\numberwithin{equation}{section}

\numberwithin{table}{section}

\numberwithin{figure}{section}

\setlength{\paperwidth}{210mm} \setlength{\paperheight}{297mm}
\setlength{\oddsidemargin}{0mm} \setlength{\evensidemargin}{0mm}
\setlength{\topmargin}{-20mm} \setlength{\headheight}{10mm}
\setlength{\headsep}{13mm} \setlength{\textwidth}{160mm}
\setlength{\textheight}{240mm} \setlength{\footskip}{15mm}
\setlength{\marginparwidth}{0mm} \setlength{\marginparsep}{0mm}

\setlength{\paperwidth}{210mm} \setlength{\paperheight}{297mm}
\setlength{\oddsidemargin}{0mm} \setlength{\evensidemargin}{0mm}
\setlength{\topmargin}{-20mm} \setlength{\headheight}{10mm}
\setlength{\headsep}{13mm} \setlength{\textwidth}{160mm}
\setlength{\textheight}{240mm} \setlength{\footskip}{15mm}
\setlength{\marginparwidth}{0mm} \setlength{\marginparsep}{0mm}

\newcommand{\bea}{\begin{eqnarray*}}
\newcommand{\eea}{\end{eqnarray*}}
\newcommand{\bean}{\begin{eqnarray}}
\newcommand{\eean}{\end{eqnarray}}

\begin{document}
\title[Intertwining operator associated to the complex Dunkl operator of type $G(m,1,N)$]{Intertwining operator associated to the complex Dunkl operator of type $G(m,1,N)$}
\author{Fethi Bouzeffour}
\address{Department of mathematics, College of Sciences\\ King Saud University,
 P. O Box 2455 Riyadh 11451, Saudi Arabia.} \email{fbouzaffour@ksu.edu.sa}
\author{Sami Ghazouani }
\address{Institut Pr\'eparatoire aux Etudes d'Ing\'enieur de Bizerte,
Universit\'e de Carthage, 7021 Jarzouna, Tunisie}
\email{Ghazouanis@yahoo.fr} 

\subjclass[2000]{33C45, 42A38.}%
\keywords{Special functions, fractional integrals.}%

\begin{abstract}In this work, we consider the Dunkl complex reflection operators
related to the group $G(m,1,N)$ in the
complex plane  \begin{align*}
T_i=\frac{\partial}{\partial z_i}+k_0\sum_{j\neq i}\sum_{r=0}^{m-1}\frac{1-s_i^{-r}(i,j)s_i^r}
{z_i-\varepsilon^r z_j}+\sum_{j=1}^{m-1}k_j\sum_{r=0}^{m-1}\frac{\varepsilon^{-rj}s_i^r}{z_i}, \,\,1\leq i\leq N.
\end{align*}
We first review the theory of Dunkl operators for complex reflection
groups we recall some results related to the hyper--Bessel
functions, which are solutions of a higher order differential
equation. Secondly, we construct a new explicit intertwining
operator between the operator $T_i$ and the partial derivative
operator $\frac{\partial}{\partial x_i}.$ As application we given an
explicit solution of the system: $$ T_if(x)=\kappa\lambda_i
f(x),\,\,\, f(0)=1.$$
\end{abstract}

\maketitle
\section{Introduction}
The idea of intertwining operator $V$ such that $VP = QV$ for $P$
and $Q$ ordinary differential operators goes back to Gelfand,
Levitan, Marchenko, Naimark, Delsarte and Lions (see \cite{levitan},
\cite{Lions}). It was picked up again by C. F. Dunkl, R\"{o}sler and
K. Trim\`eche (see \cite{Dunkl}, \cite{Rosler} \cite{Trimeche}) who
established some fundamental ideas related to the class of
differential difference operators. In this work we investigate in
the rank one case the particular cases of complex reflection Dunkl
operator $T(k)$, associated with complex reflection group
$G(m,1,1)$, on the set of radial rays
$U=\cup_{j=1}^m\varepsilon^j\mathbb{R}$, which is given by
\cite{Dunklref}
\begin{equation}
T(k)f(x):=\frac{df(x)}{dx}+\sum_{i=1}^{m-1}\frac{k_i}{x}\sum_{j=0}^{m-1}\varepsilon^{-ij}f(\varepsilon^jx),\,\,
\varepsilon=e^{\frac{2i\pi}{m}}\,\,\,\text{and}\,\,\,\,
 k_i\in\mathbb{C}. \label{diffh}
 \end{equation}
In particular, when $m=2$, $T(k)$ coincides with the following Dunkl
operator on the real line
\begin{equation}
Tf(x):=\frac{df(x)}{dx}+\frac{\nu+1/2}{x}(f(x)-f(-x)).\label{Dunklop}
\end{equation}
First, we indicate briefly some results involving intertwining
operators. In \cite{Dunkl}, C. F. Dunkl has proved that there exists
a linear isomorphism $V$, called the Dunkl intertwining operator,
from the space of polynomials on $\mathbb{R}$ of degree $n$ onto
itself, satisfying the transmutation relation
 \begin{equation} T\circ V=V\circ
\frac{d}{dx}\label{trans1},\,\,\,V(1)=1.
\end{equation} In \cite{Rosler}, R\"{o}sler  has obtained an integral representation of $V$  and K. Trim\`eche \cite{Trimeche} extended it to a topological isomorphism from
$\mathcal{E}(\mathbb{R}),$ the space of even $C^{\infty}$-functions
on $\mathbb{R}$, onto itself satisfying the relation \eqref{trans1}
and obtained the following form
\begin{equation}
V(f):=\mathcal{R}_{\nu}(f_e)+\frac{d}{dx}\circ\mathcal{R}_{\nu}\circ
I(f_o),
\end{equation}
where $f_e$ and $f_o$ are respectively the even and odd parts of the
function $f$, \begin{equation} I(f)(x):=\int_0^{x}f(t)dt
\label{rightinv}
\end{equation}
 and $\mathcal{R}_{\nu}$ is the
Riemann-Liouville operator given by \begin{equation}
 \mathcal{R}_{\nu}(f)(x):=\frac{\Gamma(\nu+1)}{\Gamma(\frac{1}{2})\Gamma(\nu+\frac{1}{2})}\int_0^1
(1-t)^{\nu-\frac{1}{2}}t^{-\frac{1}{2}}f(xt^{\frac{1}{2}})dt.\label{Riemann1}
\end{equation}\\ The goal of this paper is to provide a similar
construction for an intertwining operator $V_m$ between the complex
Dunkl operator $T(k)$ and the derivative operator $\frac{d}{dx}$.
Our construction is based on some hyper-Bessel operator and
Riemann-Liouville type transform.\\
The remaining sections of this paper are organized as follows. In
Section 2, we first recall notations and some results for Dunkl
operator, we establish a new representation for the complex Dunkl
operator by using circular matrices. In section 3, we discuss some
results satisfied by the hyper-Bessel functions which can be found
in the literature. In section 4, we give a new intertwining operator
between $T(k)$ and $\frac{d}{dx}$.
\section{Complex Dunkl operators of type $G(m,1,N)$} Let $m \in \mathbb{N}$
$(m\geq 2)$.  We denote by $G$ the cyclic group generated by
$\varepsilon=e^{\frac{2i\pi}{m}}$ and by
\begin{equation}U=\cup
_{j=1}^m \varepsilon^j\mathbb{R}\end{equation}a set of radial rays
in complex plane. For $i=1,\,...,\,m,$ we define the operators
\begin{equation}
p_{i}(f)(x)=\frac{1}{m}\sum_{j=0}^{m-1}\varepsilon^{-ij}f(\varepsilon^jx).
\end{equation}
These obey
\begin{equation}
id=\sum_{i=1}^{m}p_{i},\,\,\,\,\,p_{i}p_{j}=\delta_{ij}p_{i}.
\end{equation}
Then, the elements $p_{i}$ are idempotents which are generalizations
of the primitive idempotents (1 - s)/2 and (1 + s)/2 for a real
reflection $s$.
\begin{definition}A function $f:\,U \rightarrow \mathbb{C}$
is called of type $j$  with respect to  $G$, if
$$f(\varepsilon x)=\varepsilon^jf(x),$$ hold
for every $x\in U.$
\end{definition}
  \begin{lemma}Let $f$ be a function  $f:U\rightarrow
\mathbb{C}$. Then, $f$ can be decomposed  uniquely in the form
\begin{equation*}
f=\sum_{j=0}^{m-1}f_{j}\,,
\end{equation*} where the component function $f_{j}$ is of type $j$, given by
\begin{align}
f_{j}=p_{j}(f).\label{idem}
\end{align}
\end{lemma}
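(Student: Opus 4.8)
The plan is to deduce both existence and uniqueness directly from the two identities recorded for the idempotents $p_i$, supplemented by the orthogonality relation for $m$-th roots of unity. Throughout, all indices are read modulo $m$, so that $p_0=p_m$ and the notions ``type $j$'' and ``type $j+m$'' coincide; this is consistent because $U$ is invariant under multiplication by $\varepsilon$ and $\varepsilon^m=1$.

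For existence, I would start from the resolution of the identity $\mathrm{id}=\sum_{j=0}^{m-1}p_j$ and simply set $f_j:=p_j(f)$, which gives $f=\sum_{j=0}^{m-1}f_j$ at once. It then remains to check that each $f_j$ has the correct type, and this is the one genuine computation. I would evaluate
$$ p_j(f)(\varepsilon x)=\frac{1}{m}\sum_{l=0}^{m-1}\varepsilon^{-jl}f(\varepsilon^{l+1}x), $$
reindex by $l\mapsto l+1$, and use $\varepsilon^m=1$ together with the $\varepsilon$-invariance of $U$ to fold the shifted sum back onto $\{0,\dots,m-1\}$. Factoring out one power of $\varepsilon^{j}$ then yields $f_j(\varepsilon x)=\varepsilon^{j}f_j(x)$, i.e. $f_j$ is of type $j$.

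For uniqueness, the key observation is that $p_i$ acts as the projector onto the component of type $i$. Concretely, if $g$ is any function of type $j$, then $g(\varepsilon^{l}x)=\varepsilon^{jl}g(x)$, so
$$ p_i(g)(x)=\frac{1}{m}\sum_{l=0}^{m-1}\varepsilon^{(j-i)l}\,g(x)=\delta_{ij}\,g(x), $$
by the root-of-unity orthogonality $\frac{1}{m}\sum_{l=0}^{m-1}\varepsilon^{(j-i)l}=\delta_{ij}$; equivalently one notes $g=p_j(g)$ and applies $p_ip_j=\delta_{ij}p_j$. Hence, given any decomposition $f=\sum_{j}g_j$ into components $g_j$ of type $j$, applying $p_i$ and using linearity gives $p_i(f)=\sum_j p_i(g_j)=g_i$, which forces $g_i=p_i(f)=f_i$ for every $i$. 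This shows the components are uniquely determined and completes the argument.

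The step I expect to be the only delicate one is the reindexing modulo $m$ in the existence part: one must ensure that the term dropped at $l=m$ is exactly compensated by the term reinstated at $l=0$, which is precisely where $\varepsilon^m=1$ and the stability of $U$ under $x\mapsto\varepsilon x$ enter. Everything else is bookkeeping with the stated idempotent relations and the geometric sum of powers of $\varepsilon$.
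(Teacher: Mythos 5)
Your proof is correct. The paper actually states this lemma without any proof, treating it as immediate from the displayed identities $id=\sum_{i}p_{i}$ and $p_{i}p_{j}=\delta_{ij}p_{i}$; your argument (existence via the resolution of the identity plus the reindexing computation showing $p_j(f)(\varepsilon x)=\varepsilon^{j}p_j(f)(x)$, uniqueness via root-of-unity orthogonality, i.e.\ $p_i(g)=\delta_{ij}g$ for $g$ of type $j$) is exactly the verification those identities encode, so it matches the paper's implicit approach while supplying the details, including the necessary convention $p_0=p_m$.
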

\begin{example}
Let $\kappa=e^{\frac{i\pi}{m}}.$ By using the previous Lemma we
obtain easily the following decomposition of the exponential
function $e^{\kappa x}$
$$e^{\kappa x}= \cos_m(x)+\sum_{l=1}^{m-1}\kappa^l \sin_{m,l}(x),$$ where
the hyper-trigonometric functions $\cos_m(x)$ and $\sin_m(x)$ are
given by \cite{Erdely}
\begin{align}
\cos_{m}(x):=\sum_{n=0}^{\infty}(-1)^n\frac{x^{nm}}{(nm)!}\,\,\,\text{and}\,\,\,\,\sin_{m,l}(x):=\sum_{n=0}^{\infty}(-1)^n\frac{x^{nm+l}}{(nm+l)!}.\label{cosine}
\end{align}
The function $y(x)=\cos_m(\lambda x)$ is the unique
$C^{\infty}$-solution of the system
\begin{equation*}\left\{
    \begin{array}{c}
     y^{(m)}(x)=-\lambda^m y(x),\\
      y(0)=1,\,y^{(1)}(0)=\,...\,=y^{(m-1)}(0)=0.
    \end{array}
  \right.
  \end{equation*}
\end{example}
We denote by $\mathcal{E}(U)$  the space of $C^{\infty}$-complex
valued functions on $U$ equipped with the topology of uniform
convergence on compacts of the functions and all their derivatives,
is a Frechet space and we denote by $\mathcal{E}_j(U)$ the subspace
of $\mathcal{E}(U)$ of functions of type $j$ with respect to the
group $G$. Of course we have
\begin{align*}
\mathcal{E}(U)=\bigoplus_{j=0}^{m-1}\mathcal{E}_{j}(U).\end{align*}
Let $\nu = (\nu_{1},\,...,\,\nu_{m-1},0)\in \mathbb{C}^{m}$ and $k=(k_1,\,...\,k_{m-1},\,0),$ with  $k_j=m\nu_j+m-j$.\\
  The complex reflection Dunkl operator associated to cyclic $G$ generated by $\varepsilon=e^{{2i\pi}{m}}$ is defined by
(\cite{Dunklref}, \cite{Bouzaffour})
 \begin{equation}
T(k)f(x):=\frac{df(x)}{dx}+\sum_{i=1}^{m-1}\frac{k_i}{x}\sum_{j=0}^{m-1}\varepsilon^{-ij}f(\varepsilon^jx).
 \end{equation}

\begin{proposition}The operator $T(k)$ can be written in the the form
\begin{align*}
T(k)=\frac{d}{dx}+\frac{\omega_k}{x},
\end{align*}
where $$\omega_k(f)=<\Omega\Lambda (f),\,k>,$$ $\Omega$ is the
Fourier $m\times m$ matrix, which is given by
$\Omega=(\varepsilon^{-(i-1)(j-1)})_{i,j}$ and $\Lambda(f)(z)$ is
the vector valued function form $U$ into $\mathbb{C}^{m}$, given by
$\Lambda f(x)=^t(f(x),\,f(\varepsilon
x),\,...,\,f(\varepsilon^{m-1}x)).$
 \end{proposition}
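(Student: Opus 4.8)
The plan is to observe that the asserted identity is purely algebraic: since both $T(k)$ and $\frac{d}{dx}+\frac{\omega_k}{x}$ carry the same differential part $\frac{d}{dx}$, it suffices to check that the zero-order (difference) part of $T(k)$ coincides with $\frac{1}{x}\omega_k$, that is, that
\begin{equation*}
\langle \Omega\Lambda(f)(x),k\rangle=\sum_{i=1}^{m-1}k_i\sum_{j=0}^{m-1}\varepsilon^{-ij}f(\varepsilon^j x).
\end{equation*}
So the whole statement amounts to unwinding the matrix product and the pairing and then matching coefficients.

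First I would compute $\Omega\Lambda(f)(x)$ entry by entry. Using $\Lambda f(x)={}^t(f(x),f(\varepsilon x),\dots,f(\varepsilon^{m-1}x))$ together with $\Omega=(\varepsilon^{-(i-1)(j-1)})_{i,j}$, the $i$-th component is
\begin{equation*}
(\Omega\Lambda(f)(x))_i=\sum_{j=1}^{m}\varepsilon^{-(i-1)(j-1)}f(\varepsilon^{j-1}x)=\sum_{l=0}^{m-1}\varepsilon^{-(i-1)l}f(\varepsilon^{l}x),
\end{equation*}
which, by the definition of the idempotents $p_{i}$, is exactly $m\,p_{i-1}(f)(x)$. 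Pairing this vector with $k$ then produces a linear combination $\sum_i k^{(i)}\,(\Omega\Lambda f)_i$ of the components $p_{i-1}(f)$, so the identity reduces to a statement about which characters appear and with what weights.

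The remaining, and only delicate, step is the index bookkeeping needed to identify this linear combination with the sum defining $T(k)$. One has to line up the Fourier frequency $i-1$ carried by the $i$-th row of $\Omega$ with the summation index $i$ in the original formula, and to check that the vanishing entry of $k$ is precisely the one killing the $G$-invariant component (the frequency-zero term $p_0(f)=\frac{1}{m}\sum_j f(\varepsilon^j x)$), so that the effective sum runs only over the nontrivial characters $i=1,\dots,m-1$. Once this reindexing is carried out, each coefficient $k_i$ multiplies $\sum_{j}\varepsilon^{-ij}f(\varepsilon^j x)$ as required, and dividing by $x$ and restoring $\frac{d}{dx}$ yields $T(k)=\frac{d}{dx}+\frac{\omega_k}{x}$. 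I expect this alignment of indices, rather than any genuine computation, to be the only place where care is needed.
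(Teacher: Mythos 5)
Your route is the same as the paper's (the paper defines $\omega_k=\sum_{i=1}^{m-1}k_ip_i$ and appeals to a circulant-matrix calculation), and your computation of the matrix action is right: $(\Omega\Lambda f)_i=\sum_{l=0}^{m-1}\varepsilon^{-(i-1)l}f(\varepsilon^lx)=m\,p_{i-1}(f)(x)$, with $p_0=p_m$ the $G$-invariant projection. The problem is that the step you defer as mere bookkeeping is exactly the step that fails, and the specific claim you make about it is wrong. With the paper's conventions the zero entry of $k=(k_1,\dots,k_{m-1},0)$ sits in the \emph{last} slot, so it pairs with the last row of $\Omega$, i.e.\ with $m\,p_{m-1}(f)$ --- not with the invariant component. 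What the pairing actually gives is
\begin{equation*}
\langle\Omega\Lambda(f),k\rangle=m\sum_{i=1}^{m-1}k_i\,p_{i-1}(f)
=\sum_{i=1}^{m-1}k_i\sum_{j=0}^{m-1}\varepsilon^{-(i-1)j}f(\varepsilon^jx),
\end{equation*}
whereas the difference part of $T(k)$ is $\sum_{i=1}^{m-1}k_i\sum_{j=0}^{m-1}\varepsilon^{-ij}f(\varepsilon^jx)=m\sum_{i=1}^{m-1}k_i\,p_i(f)$. These two linear combinations are genuinely different: yours charges the invariant component $p_0(f)$ with weight $k_1\neq0$ and omits $p_{m-1}(f)$; the operator $T(k)$ does the opposite. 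No reindexing of the inner sum can repair a cyclic shift in which idempotents receive which weights.

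To close the argument you must correct a convention, not just ``align indices'': either take the Fourier matrix with rows labelled by the frequencies $1,\dots,m$, i.e.\ $\Omega=(\varepsilon^{-i(j-1)})_{i,j}$, so that $(\Omega\Lambda f)_i=m\,p_i(f)$ and the pairing with $k=(k_1,\dots,k_{m-1},0)$ is exactly the difference part of $T(k)$; or keep the stated $\Omega$ and move the zero entry of $k$ to the first slot. (In fairness, the paper's own two-line proof glosses over the same point, and more: with its $\omega_k=\sum_{i=1}^{m-1}k_ip_i$ one actually gets $T(k)=\frac{d}{dx}+\frac{m\,\omega_k}{x}$, off by a factor of $m$, and the claimed identity $\omega_k(f)=\langle\Omega\Lambda(f),k\rangle$ also needs the shifted convention.) Your reduction to the zero-order part and your row computation are correct; the proof is nevertheless incomplete because its only nontrivial claim --- that the conventions line up --- is asserted rather than checked, and as stated it is false.
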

\begin{proof}Put
\begin{equation*}
\omega_k:=\sum_{i=1}^{m-1}k_{i}p_{i}.
\end{equation*}
A simple calculation shows that (see \cite{Martin})
\begin{equation*}
\omega_k(f)=<\Omega \Lambda(f),\,k>
\end{equation*}
and $$T(k)f=\frac{df}{dx}+\frac{\omega_k(f)}{x}.$$
\end{proof}
\begin{lemma} 1) If $f\in \mathcal{E}(U),$ then  $T(k) (f)\in \mathcal{E}(U).$\\
2) For $j=1,\,...,\,m-1,$ we have  $$p_j\circ
\frac{d}{dx}=\frac{d}{dx} \circ p_{j+1}.$$ Furthermore, if $f\in
\mathcal{E}_j(U),$ then $T(k)( f)\in \mathcal{E}_j(U).$
\end{lemma}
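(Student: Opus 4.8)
The three assertions are essentially independent, so the plan is to establish the two structural identities first and then read off the type statement from them.

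\emph{Part 1 (preservation of smoothness).} Writing $T(k)=\frac{d}{dx}+\frac{1}{x}\,\omega_k$ with $\omega_k=\sum_{i=1}^{m-1}k_i\,p_i$, the term $\frac{df}{dx}$ is smooth on $U$ whenever $f$ is, and each $p_i(f)$ is a finite combination of the smooth functions $x\mapsto f(\varepsilon^l x)$, hence smooth. The only delicate point is the singular factor $\frac1x$. I would handle it with Hadamard's lemma: for $1\le i\le m-1$ the function $g=p_i(f)$ is of type $i$, so setting $x=0$ in $g(\varepsilon x)=\varepsilon^i g(x)$ and using $\varepsilon^i\neq 1$ forces $g(0)=0$; Hadamard's lemma on each ray, propagated across rays by the type relation, then writes $g(x)=x\,h(x)$ with $h\in\mathcal{E}(U)$, so that $\frac{g(x)}{x}=h(x)$ is smooth. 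Summing over $i$ gives $T(k)f\in\mathcal{E}(U)$.

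\emph{Part 2, the intertwining $p_j\circ\frac{d}{dx}=\frac{d}{dx}\circ p_{j+1}$.} This is a one-line direct computation. Differentiating $p_{j+1}(f)(x)=\frac1m\sum_{l=0}^{m-1}\varepsilon^{-(j+1)l}f(\varepsilon^l x)$ and using $\frac{d}{dx}f(\varepsilon^l x)=\varepsilon^l f'(\varepsilon^l x)$ produces an extra $\varepsilon^l$ in each summand, which converts the weight $\varepsilon^{-(j+1)l}$ into $\varepsilon^{-jl}$; the resulting sum is exactly $p_j(f')(x)$.

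\emph{Part 2, the type statement.} The plan is to reduce the inclusion $T(k)\,\mathcal{E}_j(U)\subseteq\mathcal{E}_j(U)$ to the commutation $p_j\circ T(k)=T(k)\circ p_j$: granting this, any $f\in\mathcal{E}_j(U)$ satisfies $p_jf=f$, whence $p_j\big(T(k)f\big)=T(k)\big(p_jf\big)=T(k)f$, i.e. $T(k)f\in\mathcal{E}_j(U)$. To verify the commutation I would split $T(k)=\frac{d}{dx}+\frac1x\,\omega_k$ once more. The multiplier $\omega_k=\sum_i k_i p_i$ commutes with every $p_j$ by orthogonality, $p_ip_j=\delta_{ij}p_i$, so the whole difficulty sits in how the factors $\frac{d}{dx}$ and $\frac1x$ interact with the projections; for these I would feed in the identity of the previous paragraph together with the analogous elementary relation for multiplication by $\frac1x$. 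The step I expect to be the main obstacle is the index bookkeeping: the derivative lowers the type by one, so for the composite $T(k)$ to return the same index $j$ one must check that the shift contributed by $\frac1x$ acting through $\omega_k$ exactly offsets it. Pinning down this cancellation at the level of the idempotents is the crux that yields $T(k)f\in\mathcal{E}_j(U)$.
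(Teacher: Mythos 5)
Your Part 1 and the commutation identity $p_j\circ\frac{d}{dx}=\frac{d}{dx}\circ p_{j+1}$ are both correct, and they are essentially the paper's own route: the paper compresses the two observations into the single identity
\begin{equation*}
p_{i}(f)(x)=x\int_0^1p_{i-1}(f^{(1)})(xt)\,dt,\qquad 1\le i\le m-1,
\end{equation*}
which is exactly your ``vanishing at $0$ plus Hadamard division'' made explicit: the prefactor $x$ records $p_i(f)(0)=0$ (your argument via $g(\varepsilon\cdot 0)=\varepsilon^i g(0)$, $\varepsilon^i\neq 1$), and the integrand records the derivative shift $\frac{d}{dx}\circ p_i=p_{i-1}\circ\frac{d}{dx}$, which you verify by the same one-line computation. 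The integral form also settles cleanly the point you leave slightly vague (``propagated across rays''), since $\int_0^1 p_{i-1}(f')(xt)\,dt$ is manifestly a single smooth function on all of $U$.

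The genuine gap is in your plan for the type statement. The commutation $p_j\circ T(k)=T(k)\circ p_j$ that you propose to verify is false, and the cancellation you defer as the ``crux'' --- that the shift contributed by $\frac1x$ through $\omega_k$ offsets the derivative's shift --- does not exist: multiplication by $\frac1x$ also \emph{lowers} the type by one, since $g(\varepsilon x)=\varepsilon^i g(x)$ gives $g(\varepsilon x)/(\varepsilon x)=\varepsilon^{i-1}\,g(x)/x$. So the two summands of $T(k)$ shift type in the \emph{same} direction, and combining your own ingredients ($\frac{d}{dx}\circ p_j=p_{j-1}\circ\frac{d}{dx}$, $\omega_k p_j=k_jp_j$, and the $\frac1x$ shift) yields the correct relation $T(k)\circ p_j=p_{j-1}\circ T(k)$: the operator $T(k)$ maps $\mathcal{E}_j(U)$ into $\mathcal{E}_{j-1}(U)$ (indices mod $m$), not into $\mathcal{E}_j(U)$. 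In fact the lemma as printed is a misprint rather than something you failed to prove: for $m=2$ the operator \eqref{Dunklop} interchanges even and odd functions, and the proof of Theorem 4.1 relies on precisely this downward shift (applying $\frac{d}{dx}+\frac{k_j}{x}$ to the type-$j$ function $A_j\circ\mathcal{R}_{\nu,m}\circ I^{m-j}\circ p_j(f)$ produces the $A_{j-1}$ terms). No bookkeeping can rescue the commutation argument; the honest conclusion of your computation is $T(k)\bigl(\mathcal{E}_j(U)\bigr)\subseteq\mathcal{E}_{j-1}(U)$, and the statement should be corrected accordingly.
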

\begin{proof}
This follows immediately from the fact that:\\
For $i=1,\,...,\,m-1,$
\begin{align*}
p_{i}(f)(x)=\frac{1}{m}\sum_{j=0}^{m-1}\varepsilon^{-ij}f(\varepsilon^jx)
           =x\int_0^1p_{i-1}(f^{(1)})(xt)dt.
\end{align*}
\end{proof}
\section{The hyper-Bessel functions}
Let $\nu=(\nu_1,\,...,\,\nu_{m-1})\in \mathbb{R}^{m-1},$ satisfying
 $\nu_k\geq -1+\frac{k}{m},$ we denote by
\begin{align*}&|\nu|:=\nu_1+...+\nu_{m-1},\\&
\nu+\mathbf{n}:=(\nu_1+n,\,...,\,\nu_{m-1}+n)\,(n\in \mathbb{N}),\\&
\Gamma(\nu):=\Gamma(\nu_1)...\,\Gamma(\nu_{m-1}).\end{align*} The
normalized hyper-Bessel function with vector index $\nu$ is defined
by (see, \cite{Klyu}, \cite{Dularue}, \cite{Dimov2})
\begin{align}
\mathcal{J}_{\nu,m}(x):&=(\frac{x}{m})^{-|\nu|}\Gamma(\nu+\mathbf{1})J_{\nu,m}(x)\label{normalized}
=\sum_{n=0}^{\infty}\frac{(-1)^n\Gamma(\nu+\mathbf{1})}
{n!\Gamma(\nu+\mathbf{n}+\mathbf{1})}(\frac{ x}{m})^{nm
}\nonumber.\end{align} Here $J_{\nu,m}(x)$ is the hyper-Bessel
function \cite{Dularue}. The function $\mathcal{J}_{\nu,m}(\lambda
x)$ is a unique $C^{\infty}$-solution of the following problem
\cite{Klyu}
\begin{equation}
\left\{
  \begin{array}{l l}
    B_m(f)(x)=-\lambda^m f(x), \\
   f(0)=1,\,f^{(1)}(0)=\,...\, =f^{(m-1)}(0)=0.
  \end{array} \right.
\end{equation}
where the hyper-Bessel is given by \begin{equation}
B_m=\prod_{j=1}^{m-1}(\frac{d}{dx}+\frac{m\nu_j+m-j}{x})\frac{d}{dx}.
\end{equation}
 The simplest higher order hyper-Bessel operator is the
operator of $m$-fold differentiation
\begin{align*}
\frac{d^m}{dx^m}
=x^{-m}(x\frac{d}{dx})(x\frac{d}{dx}-1)...(x\frac{d}{dx}-m+1).
\end{align*}
For $m=2$ and $a_1=2\nu+1,$ $(\nu>-1/2)$ the hyper-Bessel operator
generalizes the well known second order differential operator of
Bessel $B_2$ given by where
\begin{equation}
B_2:=\frac{d^2}{dx^2}+\frac{2\nu+1}{x}\frac{d}{dx},\label{operaBes}
\end{equation}
and the corresponding normalized Bessel function is given by
\begin{equation*}
\mathcal{J}_{\nu,2}(x):=\frac{2^{\nu}\Gamma(\nu+1)}{x^{\nu}}J_{\nu}(x),
\end{equation*}
where $J_{\nu}(x)$ is the classical Bessel function (see,
\cite{Wat}). From Corollary 2 in \cite{Klyu} we obtain the following
differential recurrence relations for the normalized hyper-Bessel
functions $\mathcal{J}_{\nu,m}(x)$
\begin{align}
&\frac{d}{dx}\mathcal{J}_{\nu,m}(x)=-\frac{(\frac{x}{m})^{m-1}}{(\nu_1+1)\,...\,(\nu_{m-1}+1)}\mathcal{J}_{\nu+\mathbf{1},m}(x),\label{recurrence1}\\&
(\frac{d}{dx}+\frac{m\nu_k}{x})\mathcal{J}_{\nu,m}(x)=\frac{m\nu_k}{x}\mathcal{J}_{\nu-e_k,m}(x),\label{recurrence2}
\end{align} where $e_k, \, (1\leq k\leq m-1)$ are the standard basis of $\mathbb{R}^{m-1}$.

\section{Intertwining operator}
Let $\nu=(\nu_1,\,...,\,\nu_{m-1})\in \mathbb{C}^{m-1}$ such that
$\Re(\nu_j)>0$. We define the fractional integrals
$\mathcal{R}_{\nu,m}$ of Riemann-Liouville type for
   $f\in \mathcal{E}_m(U)$ ($\mathcal{E}_m(U)$ the subspace
of $\mathcal{E}(U)$ of functions of type $m$) by
\begin{align}
 \mathcal{R}_{\nu,m}f(x)&:=\frac{m^{3/2}\Gamma(\nu+\mathbf{1})}{(2\pi)^{(m-1)/2}}\int_0^1G_{m-1,m-1}^{m-1,0}\left(\left.
 \begin{matrix}  \nu_{1}, \nu_{2}, ..., \nu_{m-1}\\
   -\frac{1}{m},\,...,\, -\frac{m-1}{m}\end{matrix}\right \vert t\right)f(xt^{\frac{1}{m}})dt\label{Dimovstrans},
   \end{align} where
$G_{p,q}^{m,n}\left(\left.   \begin{matrix} \ a_{1}, a_{2}, ..., a_{p}\\
  \ b_1,b_{2},...,b_q\end{matrix}\right \vert z\right)
 $ is the Meijer's function (see \cite{Erdely}). This operator intertwines the hyper-Bessel operator $B_m$
and the $m$-th differential operator $\frac{d^m}{dz^m}$
\begin{equation} B_m\circ
\mathcal{R}_{\nu,m}=\mathcal{R}_{\nu,m}\circ \frac{d^m}{dz^m},
\label{intertB}
\end{equation}
and maps the hyper-cosine function $\cos_m(\lambda x)$
\eqref{cosine} of order $m\geq 2$ into a normalized hyper-Bessel
function $\mathcal{J}_{\nu,m}$ $$\mathcal{J}_{\nu,m}(\lambda
x)=\mathcal{R}_{\nu,m}(\cos_m(\lambda\,.))(x).$$For $m=2,$
$\mathcal{R}_{\nu,m}$ is reduced to the so called Riemann-Liouville
transform $\mathcal{R}_{k}$ defined in \eqref{Riemann1}. The
operator $\mathcal{R}_{\nu}^m$ can be written also as a product of
the Erd\'elyi-Kober integrals
\begin{equation}
\mathcal{R}_{\nu,m}f(x)=\frac{m^{3/2}\Gamma(\nu+\mathbf{1})}{(2\pi)^{(m-1)/2}}\prod_{k=1}^mI_{m-1}^{(\frac{k}{m},\,\nu_k+1-\frac{k}{m})}f(x),
\end{equation}
where the Erd\'elyi-Kober fractional integrals is defined by
\begin{equation}
I^{\alpha,\,\beta}_{\gamma}f(x):=\int_{0}^1\frac{(1-t)^{\alpha-1}t^{\beta}}{\Gamma(\alpha)}
f(xt^{\frac{1}{\gamma}})dt,\,\,\,
Re(\alpha)>0,\,\,Re(\beta)>0,\,\,Re(\gamma)>0.
\end{equation} By Theorem 3.5.7 in \cite{kiryakova2}  and by similar argument as \cite{Trimeche}, we can
show that the operator $\mathcal{R}_{\nu,m}$ is a topological
isomorphism from $\mathcal{E}_m(U)$ onto itself and its inverse is
given by \begin{equation}
\mathcal{R}_{\nu,m}^{-1}f(x)=\frac{(2\pi)^{(m-1)/2}}{m^{3/2}\Gamma(\nu+\mathbf{1})}\prod_{k=1}^m
\prod_{j=1}^{n_k}(-1+j+\frac{k}{m}+\frac{1}{m}x\frac{d}{dx})I_{m-1}^{(\nu_k,\,n_k-\nu_k+\frac{k}{m}+1)}f(x),
\end{equation}where \begin{equation}
 n_k=\left\{
  \begin{array}{l l}
    [\nu_k-\frac{k}{m}+1]+1,\,\,\,\text{if}\,\,\,\, \nu_k-\frac{k}{m}\ \text{is non integer}, \\
   \nu_k-\frac{k}{m}+1,\,\,\,\text{if}\,\,\,\,\, \nu_k-\frac{k}{m}\ \text{is integer}.
  \end{array} \right.  \label{SSys1}
\end{equation}Let consider the operator $V_m$ defined for $f\in
\mathcal{E}(U)$ by
\begin{align}V_m(f)&=\sum_{j=1}^mA_{j}\circ \mathcal{R}_{\nu,m}\circ
I^{m-j}\circ p_j(f),
\end{align} where the operator $I$ is defined in\eqref{rightinv} and
\begin{align} &  A_{m}=1, \,\,\,\,A_{m-1}=\frac{d}{dx}, \,\,\,\,
A_j=\prod_{k=j+1}^{m-1}(\frac{d}{dx}+\frac{m\nu_{k}+m-k}{x})\frac{d}{dx},\,\,\,
1\leq j\leq m-2.\end{align} The operator $V_m$ is well defined on
the space $\mathcal{E}(U),$ since for $f\in\mathcal{E}(U),$ we have
 $$I^{m-j}\circ p_j(f) \in \mathcal{E}_m(U).$$
\begin{theorem}
The operator $V_m$ satisfy the following intertwining relation on
the space $\mathcal{E}(U)$
$$T(k)\circ V_m=V_m \circ \frac{d}{dx}.$$
\end{theorem}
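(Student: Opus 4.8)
The plan is to verify the intertwining relation $T(k) \circ V_m = V_m \circ \frac{d}{dx}$ by exploiting the type decomposition $\mathcal{E}(U) = \bigoplus_{j=0}^{m-1} \mathcal{E}_j(U)$ and checking the identity componentwise. Since $V_m(f) = \sum_{j=1}^m A_j \circ \mathcal{R}_{\nu,m} \circ I^{m-j} \circ p_j(f)$, it suffices to trace how each piece of the operator interacts with the grading. The key structural facts I would invoke are Lemma 2.6(2), which says $p_j \circ \frac{d}{dx} = \frac{d}{dx} \circ p_{j+1}$ and that $T(k)$ preserves each $\mathcal{E}_j(U)$, together with the intertwining property $B_m \circ \mathcal{R}_{\nu,m} = \mathcal{R}_{\nu,m} \circ \frac{d^m}{dz^m}$ from equation (4.5).

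The heart of the argument is to understand how $T(k)$ acts when composed with the operators $A_j$. On functions of type $j$, the operator $T(k) = \frac{d}{dx} + \frac{\omega_k}{x}$ reduces to $\frac{d}{dx} + \frac{k_j}{x}$ (since $\omega_k = \sum_i k_i p_i$ acts as multiplication by $k_j$ on $\mathcal{E}_j(U)$, with $k_0 = 0$). With $k_j = m\nu_j + m - j$, this matches exactly the factors appearing in the definition of $A_j = \prod_{k=j+1}^{m-1}\left(\frac{d}{dx} + \frac{m\nu_k + m-k}{x}\right)\frac{d}{dx}$ and in the hyper-Bessel operator $B_m$. So I would first establish the \emph{telescoping identity}: applying $T(k)$ (in its type-$j$ form) to $A_j \circ \mathcal{R}_{\nu,m} \circ I^{m-j}$ should, via the grading shift $p_j \frac{d}{dx} = \frac{d}{dx} p_{j+1}$, convert the $A_j$-term for type $j$ into the $A_{j+1}$-term for type $j+1$. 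Concretely, I expect $T(k) \circ A_j = A_{j+1} \circ \frac{d}{dx}$ as operators acting appropriately across the type grading, because prepending one more factor $\left(\frac{d}{dx} + \frac{k_j}{x}\right)$ to $A_{j+1}$ rebuilds $A_j$. The two boundary cases $j=m$ and $j=1$ need separate attention: for $j=m$ (with $A_m = 1$), the factor $\frac{d^m}{dz^m}$ produced by $B_m$ must be absorbed using the intertwining (4.5) and the fact that $I$ is a right inverse to $\frac{d}{dx}$; for the bottom index, the definition of $A_1$ closes off the telescope.

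I would then assemble the pieces: on an arbitrary $f$, write $\frac{d}{dx}f$ and push it through $V_m$, using $p_j \circ \frac{d}{dx} = \frac{d}{dx} \circ p_{j+1}$ to re-index the sum so that the $\frac{d}{dx}$ lands next to $I^{m-j}$, where one power of $I$ is cancelled by the derivative (turning $I^{m-j}$ into $I^{m-j-1}$ and matching the type-$(j+1)$ summand). Comparing this with $T(k) \circ V_m(f)$, decomposed type-by-type using the telescoping identity above, the two expressions should coincide term by term. The \textbf{main obstacle} I anticipate is the careful bookkeeping at the boundary index $j=1$, where the full hyper-Bessel operator $B_m$ emerges from the product $T(k) \circ A_1$: one must verify that the extra factor $\frac{d}{dx}$ at the top of $B_m = \prod_{j=1}^{m-1}\left(\frac{d}{dx} + \frac{m\nu_j+m-j}{x}\right)\frac{d}{dx}$ is exactly what the intertwining relation (4.5) consumes when converted to $\frac{d^m}{dz^m} \circ \mathcal{R}_{\nu,m}^{-1}$-style manipulation, and that the leftover $I^{m-1}$ combined with $\frac{d^m}{dx^m}$ reduces correctly. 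Keeping track of which factor of $\frac{d}{dx}$ is cancelled by $I$ versus which is genuinely consumed by $B_m$ is the delicate point; everything else is a routine verification once the grading shift and the type-$j$ reduction of $T(k)$ are in hand.
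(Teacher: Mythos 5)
Your plan is essentially the paper's own proof: decompose $V_m(f)$ into type components, let $T(k)$ act as $\frac{d}{dx}+\frac{k_j}{x}$ on the type-$j$ summand (with $k_m=0$), telescope the first-order factors into the $A_j$'s, handle the bottom of the telescope with $B_m$ and the intertwining relation \eqref{intertB}, and re-index $V_m\circ\frac{d}{dx}$ using $p_j\circ\frac{d}{dx}=\frac{d}{dx}\circ p_{j+1}$ together with the cancellation of one power of $I$ against the derivative. With one correction, your argument is exactly the paper's computation.

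The correction concerns the identity you single out as the heart of the argument: $T(k)\circ A_j = A_{j+1}\circ\frac{d}{dx}$ is false as stated, and the telescope runs in the opposite direction. Since $A_j=\prod_{k=j+1}^{m-1}\bigl(\frac{d}{dx}+\frac{m\nu_k+m-k}{x}\bigr)\frac{d}{dx}$, prepending $\bigl(\frac{d}{dx}+\frac{k_j}{x}\bigr)$ to $A_j$ (not to $A_{j+1}$) rebuilds the longer product: $\bigl(\frac{d}{dx}+\frac{k_j}{x}\bigr)\circ A_j=A_{j-1}$ for $2\le j\le m-1$, and $=B_m$ for $j=1$; no factor $\frac{d}{dx}$ appears in this identity at all. (A quick sanity check: $T(k)\circ A_j$ has order $m-j+1$, while $A_{j+1}\circ\frac{d}{dx}$ has order $m-j$, so your displayed identity cannot hold.) The displaced derivative instead shows up only when you re-index the other side: the $j$-th summand of $V_m\bigl(\frac{df}{dx}\bigr)$ equals $A_j\circ\mathcal{R}_{\nu,m}\circ I^{m-j-1}\circ p_{j+1}(f)$, which matches the $(j+1)$-st summand of $T(k)\circ V_m(f)$ after telescoping; so the correct matching is ``term $j+1$ of $T(k)\circ V_m$ equals term $j$ of $V_m\circ\frac{d}{dx}$,'' with the $j=1$ term of $T(k)\circ V_m$ (the $B_m$ term, reduced via \eqref{intertB} and $\frac{d^m}{dx^m}\circ I^{m-1}=\frac{d}{dx}$) landing on the $j=m$ summand, and the $j=m$ term landing on the $j=m-1$ summand. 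Your assembly paragraph and your ``main obstacle'' paragraph actually describe this correct matching, so the slip is localized to the displayed identity, but as written that identity would derail a literal execution of the proof. One further point that both you and the paper pass over silently: $I\circ\frac{d}{dx}$ is the identity only modulo the value at $0$; this is harmless because $p_{j+1}(f)(0)=0$ for $1\le j\le m-2$, and the leftover constant in the remaining case is annihilated by the derivative in $A_{m-1}$ after $\mathcal{R}_{\nu,m}$ is applied, but it deserves a line in a complete write-up.
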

\begin{proof} Let $f \in \mathcal{E}(U).$ It is clearly that for $j=1,\,...,\,m$, the function $$A_j\circ I^{m-j}\circ p_j(f)\in
\mathcal{E}_j(U).$$ Then,
\begin{align*}T(k)\circ V_m(f)&=\frac{d}{dz}\circ \mathcal{R}_{\nu,m}\circ
p_m(f) + \sum_{j=1}^{m-1}(\frac{d}{dx}+\frac{k_j}{x})\circ A_{j}\circ
\mathcal{R}_{\nu,m}\circ I^{m-j}\circ p_j(f)\\&=\frac{d}{dx}\circ
\mathcal{R}_{\nu,m}\circ p_m(f) + B_{m}\circ \mathcal{R}_{\nu,m}\circ
I^{m-1}\circ p_1(f)+ \sum_{j=2}^{m-1} A_{j-1}\circ
\mathcal{R}_{\nu,m}\circ I^{m-j}\circ p_j(f).\end{align*} On the other
hand from \eqref{intertB}, we can write
\begin{align*}
B_{m}\circ \mathcal{R}_{\nu,m}\circ
I^{m-1}\circ p_1=\mathcal{R}_{\nu,m}\circ
\frac{d^m}{dz^m}\circ I^{m-1}p_1=
\mathcal{R}_{\nu,m}\circ
\frac{d}{dx}\circ p_1=A_m\circ\mathcal{R}_{\nu,m}\circ p_m
\circ\frac{d}{dx}.
\end{align*}
Similarly \begin{align*} \frac{d}{dx}\circ \mathcal{R}_{\nu,m}\circ
p_m=A_{m-1}\circ\mathcal{R}_{\nu,m}\circ I\circ p_{m-1}\circ
\frac{d}{dx}.\end{align*} So that
\begin{align*}
\sum_{j=2}^{m-1} A_{j-1}\circ
\mathcal{R}_{\nu,m}\circ I^{m-j}\circ p_j&=\sum_{j=1}^{m} A_{j}\circ
\mathcal{R}_{\nu,m}\circ I^{m-j-1}\circ p_{j+1},\\&=
\sum_{j=1}^{m-2} A_{j}\circ
\mathcal{R}_{\nu,m}\circ I^{m-j}\circ\frac{d}{dx}\circ p_{j+1}\\&=
\sum_{j=1}^{m-2} A_{j}\circ
\mathcal{R}_{\nu,m}\circ I^{m-j}\circ p_j\circ \frac{d}{dx}.
\end{align*}
Thus,
\begin{align*}T(k)\circ V_m(f)=\sum_{j=1}^{m} A_{j}\circ
\mathcal{R}_{\nu,m}\circ I^{m-j}\circ p_j\circ \frac{d}{dx}(f)=V_m\circ \frac{d}{dx}(f).
\end{align*}
\end{proof}

\begin{theorem} Under the condition \begin{equation}k_j=m\nu_j+m-j\geq 0,\,\,j=1,\,...,\,m-1.\label{const}\end{equation}The following  system
\begin{equation}
 \left\{
  \begin{array}{l l}
    T(k)f(x)=\kappa\lambda f(x), \\
   f(0)=1 .
  \end{array} \right.  \label{SSys1}
\end{equation} has the following solution
\begin{align}
\mathcal{D}_{\nu}(\lambda,\, x)=\mathcal{J}_{\nu}( \lambda
x)+\sum_{j=1}^{m-1}\frac{(\kappa\lambda)^{j}}{m^j(\nu_1+1)\,...\,(\nu_{m-j}+1)}
\mathcal{J}_{(\nu_1+1,\dots,\nu_{j}+1,\nu_{j+1},...,\nu_{m-1})}(\lambda
x).\label{Dunklkernel}
\end{align}

 \end{theorem}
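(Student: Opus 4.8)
The plan is to exploit the intertwining relation $T(k)\circ V_m=V_m\circ\frac{d}{dx}$ just established. With $\kappa=e^{i\pi/m}$ we have $\frac{d}{dx}e^{\kappa\lambda x}=\kappa\lambda\,e^{\kappa\lambda x}$, so the candidate solution is $f:=V_m\!\left(e^{\kappa\lambda\,\cdot}\right)$, which is automatically an eigenfunction:
\[
T(k)f=T(k)\circ V_m\!\left(e^{\kappa\lambda\,\cdot}\right)=V_m\!\left(\tfrac{d}{dx}e^{\kappa\lambda\,\cdot}\right)=\kappa\lambda\,V_m\!\left(e^{\kappa\lambda\,\cdot}\right)=\kappa\lambda f.
\]
Thus the whole task reduces to evaluating $V_m(e^{\kappa\lambda\,\cdot})$ in closed form and checking $f(0)=1$; the assertion is that this closed form is exactly $\mathcal{D}_\nu(\lambda,x)$ of \eqref{Dunklkernel}. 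The hypothesis \eqref{const} will be used only to guarantee that the hyper-Bessel functions and the fractional integral $\mathcal{R}_{\nu,m}$ occurring below are well defined.

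First I would split $e^{\kappa\lambda x}$ into its $G$-isotypic parts. Expanding $e^{\kappa\lambda x}=\sum_{r\ge0}\frac{(\kappa\lambda x)^r}{r!}$, writing $r=nm+l$ and using $\kappa^{m}=-1$ exactly as in the Example, the projections act by $p_m(e^{\kappa\lambda\,\cdot})=\cos_m(\lambda x)$ and $p_j(e^{\kappa\lambda\,\cdot})=\kappa^{j}\sin_{m,j}(\lambda x)$ for $1\le j\le m-1$, with $\cos_m,\sin_{m,j}$ as in \eqref{cosine}. Next I push each part through the operators of $V_m$. The right inverse $I$ of \eqref{rightinv} raises the index of the series \eqref{cosine}, $I\big(\sin_{m,l}(\lambda\,\cdot)\big)(x)=\tfrac1\lambda\sin_{m,l+1}(\lambda x)$, whence $I^{\,m-j}p_j(e^{\kappa\lambda\,\cdot})=\kappa^{j}\lambda^{-(m-j)}\big(1-\cos_m(\lambda x)\big)$, a function of type $m$ as the construction of $V_m$ demands. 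Applying $\mathcal{R}_{\nu,m}$ and using both $\mathcal{R}_{\nu,m}\big(\cos_m(\lambda\,\cdot)\big)(x)=\mathcal{J}_{\nu,m}(\lambda x)$ and its $\lambda\to0$ specialization $\mathcal{R}_{\nu,m}(1)=1$, the $j$-th inner piece becomes $\kappa^{j}\lambda^{-(m-j)}\big(1-\mathcal{J}_{\nu,m}(\lambda x)\big)$, while the type-$m$ component gives $\mathcal{J}_{\nu,m}(\lambda x)=\mathcal{J}_\nu(\lambda x)$, already the leading term of \eqref{Dunklkernel}.

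It remains to apply the operators $A_j$ and to reassemble. Since each $A_j$ ($1\le j\le m-1$) ends in $\frac{d}{dx}$ it annihilates the constant, so that
\[
V_m\!\left(e^{\kappa\lambda\,\cdot}\right)=\mathcal{J}_{\nu,m}(\lambda x)-\sum_{j=1}^{m-1}\kappa^{j}\lambda^{-(m-j)}\,A_j\big(\mathcal{J}_{\nu,m}(\lambda\,\cdot)\big)(x).
\]
The heart of the matter is the evaluation of $A_j(\mathcal{J}_{\nu,m}(\lambda\,\cdot))$, which I would perform on the power series $\mathcal{J}_{\nu,m}(\lambda x)=\sum_n a_n(\lambda x)^{nm}$ term by term: acting on $x^{nm}$, the trailing $\frac{d}{dx}$ contributes the factor $mn$ while each factor $\big(\frac{d}{dx}+\frac{m\nu_k+m-k}{x}\big)$ contributes the clean factor $m(n+\nu_k)$, which cancels the last factor of the Pochhammer symbol $(\nu_k+1)_n$ occurring in $a_n$; equivalently one may iterate the recurrences \eqref{recurrence1} and \eqref{recurrence2}. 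The factor $mn$ kills the $n=0$ term, so $A_j(\mathcal{J}_{\nu,m}(\lambda\,\cdot))(x)$ begins at order $x^{j}$, and after the shift $n\mapsto n+1$ it is seen to be a constant times $x^{j}$ times a normalized hyper-Bessel series in which the first $j$ indices of $\nu$ are raised by one. In particular every summand with $j\ge1$ vanishes at $x=0$, whence $f(0)=\mathcal{J}_\nu(0)=1$ and the initial condition is satisfied. Recombining the prefactors then yields the closed form \eqref{Dunklkernel}. I expect the only genuine difficulty to lie in this final bookkeeping — tracking the index shifts, the powers of $m$, $\lambda$ and $x$, and the reindexing of the Pochhammer products — so that the resulting constants agree with the coefficients displayed in \eqref{Dunklkernel}.
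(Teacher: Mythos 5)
Your proposal takes exactly the same route as the paper's own proof: the paper also applies $V_m$ to $e^{\kappa\lambda x}$, invokes Theorem 4.1 to conclude that $V_m(e^{\kappa\lambda\,\cdot})$ solves \eqref{SSys1}, and then asserts --- without giving the computation --- that the recurrences \eqref{recurrence1} and \eqref{recurrence2} put $V_m(e^{\kappa\lambda\,\cdot})$ into the form \eqref{Dunklkernel}. Everything you add is the computation the paper omits, and your steps are correct: the isotypic parts of the exponential are $p_m(e^{\kappa\lambda\,\cdot})=\cos_m(\lambda\,\cdot)$ and $p_j(e^{\kappa\lambda\,\cdot})=\kappa^{j}\sin_{m,j}(\lambda\,\cdot)$; the operator $I^{m-j}$ turns $\sin_{m,j}$ into $\lambda^{-(m-j)}(1-\cos_m)$; the transform $\mathcal{R}_{\nu,m}$ sends $\cos_m(\lambda\,\cdot)$ to $\mathcal{J}_{\nu,m}(\lambda\,\cdot)$ and fixes constants; each $A_j$ with $1\le j\le m-1$ annihilates constants; and on $x^{nm}$ the factors of $A_j$ do produce $mn$ and $m(n+\nu_k)$ (applied in the order $k=m-1,\dots,j+1$), so the first $j$ indices get raised and each $j\ge 1$ summand carries an overall $x^{j}$.

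Your final sentence --- that the bookkeeping will "agree with the coefficients displayed in \eqref{Dunklkernel}" --- is where the proposal and the printed theorem part ways. Completing your calculation gives
\[
V_m\bigl(e^{\kappa\lambda\,\cdot}\bigr)(x)=\mathcal{J}_{\nu}(\lambda x)+\sum_{j=1}^{m-1}\frac{(\kappa\lambda x)^{j}}{m^{j}(\nu_1+1)\cdots(\nu_{j}+1)}\,\mathcal{J}_{(\nu_1+1,\dots,\nu_{j}+1,\nu_{j+1},\dots,\nu_{m-1})}(\lambda x),
\]
with the factor $x^{j}$ present and with the denominator product ending at $\nu_{j}+1$ (one factor for each raised index), whereas \eqref{Dunklkernel} as printed has $(\kappa\lambda)^{j}$ and a product ending at $\nu_{m-j}+1$. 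The printed formula cannot be right: without $x^{j}$ one gets $\mathcal{D}_{\nu}(\lambda,0)=1+\sum_{j=1}^{m-1}(\kappa\lambda)^{j}/\bigl(m^{j}(\nu_1+1)\cdots(\nu_{m-j}+1)\bigr)\neq 1$, contradicting the initial condition --- this is precisely your own (correct) observation that every $j\ge 1$ summand must vanish at $x=0$ --- and for $m=2$ it fails to reduce to the classical Dunkl kernel $\mathcal{J}_{\nu}(\lambda x)+\frac{\kappa\lambda x}{2(\nu+1)}\mathcal{J}_{\nu+1}(\lambda x)$. So your argument is sound, but carried to completion it proves a corrected version of the theorem; you should state that corrected formula explicitly rather than claim agreement with the displayed one.
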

\begin{proof}According to Theorem 4.1, $V_m$ intertwines
 $B_m$ and $\frac{d}{dx}$ in $\mathcal{E}(\mathbb{R})$. We apply the
intertwines operator $V_m$ to the initial value problem
\begin{equation}
 \left\{
  \begin{array}{l l}
  f^{'}(x)= \kappa\lambda f(x), \\
   f(0)=1.
  \end{array} \right.  \label{SSys2}
  \end{equation}
Then if $f$ is a solution of \eqref{SSys2} then $V_m(f)$ is a solution
\eqref{SSys1}. Therefore
$\mathcal{D}(\lambda,x)=V_m(e^{\kappa\lambda\,.})(x)$ is a solution
of the system \eqref{SSys1}. Using \eqref{recurrence1} and
\eqref{recurrence2} we can write $\mathcal{D}(\lambda,x)$ in form
\eqref{Dunklkernel}.\end{proof}


\begin{thebibliography}{99}
\bibitem{Fitouhi} A. Fitouhi, L. Dhaouadi and F. Bouzeffour, $r$-Extension of Dunkl operator in one variable and Bessel functions of vector index.  math.FA, {arXiv:1209.5277}.
\bibitem{Bouzaffour}F. Bouzaffour, Special Functions Associated with Complex Reflection
Groups, (to appear).
\bibitem{Yousef} Y. Ben Cheikh, Differential equations satisfied by the components with respect to the cyclic group of order $n$ of
some special functions, J.Math.Anal.Appl.244 (2) (2000) 483--497.
\bibitem{Martin} M. E. Muldoon, Generalized hyperbolic functions,
circulant, Linear Algebra Appl. 406 (2005), 272--284.
 matrices and functional equations, Linear Algebra and its Applications 406 (2005), 272--284.
\bibitem{Dularue} P. Delerue, Sur le calcul symbolique \`a $n$ variables et les fonctions
hyperbess\'eliennes. II. Fonctions hyperbess\'eliennes, Ann. Soc.
Sci. Bruxelles. S\'er. I. 67 (1953), 229--274.
\bibitem{Dimov2} I. H. Dimovski, Foundations of operational calculi for the
Bessel-type differential operators. Serdica. Bulg. Math. Publ-s
1(1975), 51--63.
\bibitem{Dimov4}I. H. Dimovski, V.S. Kiryakova, Generalized Poisson Transmutations
and corresponding representations of hyper-Bessel functions, C. R.
Acad. Bulgare Sci. 39 (1986), no. 10, 29--32.
\bibitem{Dunkl}C. F. Dunkl, Intertwining operators associated to the group S3 . Trans. Amer. Math.
Soc. 347 (1995), 3347--3374
 \bibitem{Dunklref} C. F.
Dunkl and E. M. Opdam, Dunkl operators for complex reflection
groups, Proc. London Math. Soc. (3) 86 (2003), 70--108.
\bibitem{Erdely}A. Erd\'elyi, W. Magnus; F. Oberhettinger, and F.G. Tricomi, Higher
Transcendental Functions. vol. III. Mc Graw-Hill New York, 1955.
\bibitem{levitan}B. Levitan, Inverse Sturm-Liouville problems, Moscow,
1984.
\bibitem{Lions}J.L. Lions, Equations differentielles operationelles, Springer,
1961.
\bibitem{kiryakova2}V. Kiryakova, Generalized Fractional Calculus and
Applications. Longman Scientific \& Technical and John Wiley \&
Sons, Harlow and New York, 1994.
\bibitem{Klyu} M. I.
Klyuchantsev, Singular differential operators with $r-1$ parameters
and Bessel functions of a vector index, Siberian Math. J. 24 (1983),
353--367.
\bibitem{Rosler}M. R\"{o}sler, Positivity of Dunkl's intertwining operator. Duke Math. J. 98
(1999), 445--463.
\bibitem{Trimeche} K. Trim\`eche, The Dunkl intertwining operator on spaces of functions and
distributions and integral representation of its dual, Integ.
Transf. Spec. Funct. V 12, Nr 4(2001), 349--374.
\bibitem {Wat}{Watson, G. N.}, {A Treatise on the Theory of Bessel Functions}, {2nd ed., Cambridge University Press,
Cambridge, 1944.}
\end{thebibliography}
\end{document}